\newtheorem{theorem}{Theorem}[section]
\newtheorem{corollary}[theorem]{Corollary}
\newtheorem{fact}[theorem]{Fact}
\newtheorem{lemma}[theorem]{Lemma}
\theoremstyle{definition}
\newtheorem{definition}[theorem]{Definition}
\newtheorem{question}[theorem]{Question}
\newcommand{\rstr}{{\upharpoonright}}
\newcommand{\inj}{\preccurlyeq}
\newcommand{\fto}{\preccurlyeq_{\mathrm{fto}}}
\newcommand{\bfto}{\preccurlyeq_{\mathrm{bfto}}}
\newcommand{\ZF}{\mathsf{ZF}}
\newcommand{\id}{\mathrm{id}}
\DeclareMathOperator{\dom}{dom}
\DeclareMathOperator{\ran}{ran}
\DeclareMathOperator{\mov}{mov}
\DeclareMathOperator{\calb}{\mathcal{B}}
\DeclareMathOperator{\cals}{\mathcal{S}}
\DeclareMathOperator{\calsf}{\mathcal{S}_{fin}}
\DeclareMathOperator{\calp}{\mathcal{P}}
\DeclareMathOperator{\calvf}{\mathcal{V}_{F}}
\DeclareMathOperator{\fin}{fin}
\title{Boundedly finite-to-one functions}
\author{Xiao Hu}
\address{School of Philosophy\\
	Wuhan University\\
	No.~299 Bayi Road\\
	Wuhan\\
	Hubei Province 430072\\
	People's Republic of China}
\email{hu\_xiaoxiao@outlook.com}
\author{Guozhen Shen}
\address{School of Philosophy\\
	Wuhan University\\
	No.~299 Bayi Road\\
	Wuhan\\
	Hubei Province 430072\\
	People's Republic of China}
\email{shen\_guozhen@outlook.com}
\thanks{Shen was partially supported by NSFC No.~12101466.}
\subjclass[2010]{Primary 03E10; Secondary 03E25}
\keywords{boundedly finite-to-one function, Cantor's theorem, permutation, partition, axiom of choice}
\begin{document}

\begin{abstract}
A function is boundedly finite-to-one if there is a natural number $k$ such that each point has at most $k$ inverse images. In this paper, we prove in $\mathsf{ZF}$ (i.e., the Zermelo--Fraenkel set theory without the axiom of choice) several results concerning this notion, among which are the following:
\begin{enumerate}
	\item For each infinite set $A$ and natural number $n$, there is no boundedly finite-to-one function from $\mathcal{S}(A)$ to $\mathcal{S}_{\leq n}(A)$, where $\mathcal{S}(A)$ is the set of all permutations of $A$ and $\mathcal{S}_{\leq n}(A)$ is the set of all permutations of $A$ moving at most $n$ points.
	\item For each infinite set $A$, there is no boundedly finite-to-one function from $\mathcal{B}(A)$ to $\mathrm{fin}(A)$, where $\mathcal{B}(A)$ is the set of all partitions of $A$ such that every block is finite and $\mathrm{fin}(A)$ is the set of all finite subsets of $A$.
\end{enumerate}
\end{abstract}
\maketitle
\section{Introduction}
In \cite{Cantor1891}, Cantor proves that for each infinite set $A$ there is no injection from $\mathcal{P}(A)$ to $A$. Under the axiom of choice, for infinite sets $A,B$, the existence of a finite-to-one function from $A$ to $B$ implies the existence of an injection from $A$ to $B$. However, the case without the axiom of choice is different. For example, in the second Fraenkel model (cf.~\cite[pp.~197--198]{Halbeisen2017}), there is an infinite set $A$ from which there is a two-to-one function but no injection to $\omega$. Cantor's theorem is generalized in $\ZF$ from injections to finite-to-one functions by Forster~\cite{Forster2003}, who proves that for each infinite set $A$ there is no finite-to-one function from $\mathcal{P}(A)$ to $A$.

A function is boundedly finite-to-one if there is a natural number $k$ such that each point has at most $k$ inverse images. For a set $A$ and a natural number $n$, $\cals(A)$ is the set of all permutations of $A$, $\cals_{\leq n}(A)$ is the set of permutations of $A$ moving at most $n$ points, $\cals_n(A)$ is the set of permutations of $A$ moving exactly $n$ points, $\calb(A)$ is the set of all partitions of $A$ such that every block is finite, and $\mathrm{fin}(A)$ is the set of all finite subsets of $A$.

The analog of Cantor's theorem for $\cals(A)$ is proved by Dawson and Howard~\cite{DawsonHoward1976}, which states that for each infinite set $A$, there is no injection from $\cals(A)$ to $A$. Sonpanow and Vejjajiva~\cite{SonVejja2018} generalize this result by proving that for each infinite set $A$ and natural number $n$, there is no injection from $\cals(A)$ to $A^n$. Unlike the case for $\mathcal{P}(A)$, Shen and Yuan~\cite{ShenYuan2020b} prove that the existence of an infinite set $A$ and a finite-to-one function from $\cals(A)$ to $A$ is relatively consistent with $\ZF$. In this paper, we generalize the result of Sonpanow and Vejjajiva by showing that there is no boundedly finite-to-one function from $\cals(A)$ to $A^n$. In fact, we prove a more general result, which states that there is no boundedly finite-to-one function from $\cals(A)$ to $\cals_{\leq n}(A)$.

Phansamdaeng and Vejjajiva \cite{PhanVejja2022} prove in $\mathsf{ZF}$ that for each infinite set $A$, there is no injection from $\calb(A)$ to $\fin(A)$. In \cite{Shen2024}, it is asked whether it is provable in $\ZF$ that there is no finite-to-one function from $\calb(A)$ to $A$. In this paper, we partially answer the question by showing that there is no boundedly finite-to-one function from $\calb(A)$ to $\fin(A)$. As a consequence, for each infinite set $A$ and natural number $n$, there is no boundedly finite-to-one function from $\calb(A)$ to $\cals_{\leq n}(A)$. This solves Question~5.1(2) of \cite{Shen2024} and generalizes Theorem~2.4 of \cite{SonVejja2023}.

We also settle some questions on $\cals_n(A)$ which are left open in \cite{SonVejja2023}. We show that for all natural numbers $m,n$, if $m\geq n+2$ or $m=n$, there is an injection from $\cals_n(A)$ to $\cals_m(A)$ for all infinite sets $A$, but in all other cases, it is even unprovable in $\ZF$ that there exists a finite-to-one function from $\cals_n(A)$ to $\cals_m(A)$.

\section{Preliminaries}
Throughout this paper, we shall work in $\mathsf{ZF}$. In this section we introduce briefly the terminology and notation we use, and present some results needed to prove our main theorems.

For a function~$f$ and a set $A$, we use $\dom(f)$ for the domain of~$f$, $\ran(f)$ for the range of~$f$,
$f[A]$ for the image of $A$ under~$f$, $f^{-1}[A]$ for the inverse image of $A$ under~$f$,
and $f{\upharpoonright}A$ for the restriction of $f$ to~$A$. For functions~$f,g$,
we use $g\circ f$ for the composition of $g$ and~$f$. For a set $A$, $|A|$ is the cardinality of $A$.

A function is \emph{finite-to-one} if each point has at most finitely many inverse images. A function is \emph{boundedly finite-to-one} if there is a natural number $k$ such that each point has at most $k$ inverse images. Clearly, compositions of (boundedly) finite-to-one functions are also (boundedly) finite-to-one functions.

\begin{definition}
	Let $A,B$ be any sets.
	\begin{enumerate}
		\item $A\inj B$ if there is an injection from $A$ to $B$.
		\item $A\fto B$ if there is a finite-to-one function from $A$ to $B$.
		\item $A\bfto B$ if there is a boundedly finite-to-one function from $A$ to $B$.
		\item $A\not\inj B$, $A\not\fto B$, $A\not\bfto B$ denote the negations of $A\inj B$, $A\fto B$, $A\bfto B$ respectively.
	\end{enumerate}
\end{definition}

For a permutation $s$ of $A$, we write $\mov(s)$ for the set $\{a\in A\mid s(a)\ne a\}$ (i.e., the elements of $A$ moved by $s$). For pairwise distinct elements $a_0,\dots,a_n$ of $A$, we write $(a_0;\dots;a_n)$ for the permutation of $A$ that moves $a_0$ to $a_1$, $a_1$ to $a_2$, \dots, $a_{n-1}$ to $a_n$, and $a_n$ to $a_0$, and fixes all other elements of $A$.

\begin{definition}
	Let $A$ be any set and let $n$ be any natural number.
	\begin{enumerate}
		\item $\fin(A)$ is the set of all finite subsets of $A$.
		\item $\calb(A)$ is the set of all partitions of $A$ such that every block is finite.
		\item $\cals(A)$ is the set of all permutations of $A$.
		\item $\calsf(A)=\{s\in\cals(A)\mid \mov(s)\text{ is finite}\}$.
		\item $\cals_{\leq n}(A)=\{s\in\cals(A)\mid |\mov(s)|\leq n\}$.
		\item $\cals_n(A)=\{s\in\cals(A)\mid |\mov(s)|=n\}$.
	\end{enumerate}
\end{definition}

We use the symbol $\calb$ to denote this notion just because $|\calb(n)|$ is the $n$-th Bell number $B_n$,
that is, the number of partitions of a set that has exactly $n$ elements. The Bell numbers satisfy Dobinski's formula
(see, for example, \cite{Rota1964}):
\[
B_n=\frac{1}{e}\sum_{i=0}^{\infty}\frac{i^n}{i!}.
\]

\begin{fact}\label{fact1}
	For each infinite set $A$ and natural number $n$, $\cals_{\leq n}(A)\bfto\fin(A)$.
\end{fact}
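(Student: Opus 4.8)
The plan is to take the obvious map, sending a permutation to its support. First I would define $f\colon\cals_{\leq n}(A)\to\fin(A)$ by $f(s)=\mov(s)$. This is well defined: for $s\in\cals_{\leq n}(A)$ we have $|\mov(s)|\leq n$, so $\mov(s)$ is indeed a finite subset of $A$; and the definition of $f$ uses no choice.

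Next I would bound the fibers of $f$. Fix $F\in\fin(A)$. If $|F|>n$ then $f^{-1}[\{F\}]=\varnothing$. Otherwise, a permutation $s$ of $A$ satisfies $\mov(s)=F$ if and only if $s\rstr F$ is a fixed-point-free permutation of $F$ and $s$ is the identity on $A\setminus F$; since such an $s$ is completely determined by $s\rstr F$, the cardinality of $f^{-1}[\{F\}]$ equals the number of derangements of $F$, which depends only on $|F|$ and is at most $|F|!\leq n!$. As $F$ is finite, $\cals(F)$ is finite, so this counting is entirely within $\ZF$. Hence $f$ is boundedly finite-to-one with bound $n!$, giving $\cals_{\leq n}(A)\bfto\fin(A)$.

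The step to watch is the fiber count, but it is routine: the only things needing (minimal) care are that the support map and the enumeration of fixed-point-free permutations of a given finite $F$ go through without the axiom of choice — which they do, since everything in sight is finite — and that the bound $n!$ is uniform in $F$, which is immediate. So I do not expect any real obstacle here; Fact~\ref{fact1} is essentially a warm-up observation, and a sharper bound such as $\max_{k\leq n}D_k$ (with $D_k$ the number of derangements of a $k$-element set) could be recorded if desired, though it is not needed.
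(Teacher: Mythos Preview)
Your proposal is correct and takes exactly the same approach as the paper: both use the support map $s\mapsto\mov(s)$, with the paper simply asserting it is ``clearly boundedly finite-to-one'' while you spell out the bound $n!$ via derangements.
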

\begin{proof}
	The function that maps each $s\in\cals_{\leq n}(A)$ to $\mov(s)\in\fin(A)$ is clearly boundedly finite-to-one.
\end{proof}

\begin{lemma}\label{lemma2}
	For each infinite set $A$ and natural number $n\ne 0$, $A^n\inj\cals_{n+1}(A)$.
\end{lemma}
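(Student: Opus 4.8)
The plan is to encode each $\bar a=(a_1,\dots,a_n)\in A^n$ as an $(n{+}1)$-cycle whose support is built from the distinct entries of $\bar a$ padded out with a few fixed auxiliary elements, arranged so that $\bar a$ can be read back off the cycle. Since $A$ is infinite, fix pairwise distinct $d_0,\dots,d_N\in A$ with $N=2n+1$ (any sufficiently large $N$ depending only on $n$ works), and put $D=\{d_0,\dots,d_N\}$.

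Partition $A^n$ into finitely many pieces according to the \emph{pattern} of $\bar a$: the equivalence relation $i\sim j\iff a_i=a_j$ on $\{1,\dots,n\}$, together with the record of which coordinates take values in $D$ and which values these are. A tuple of a fixed pattern $P$ is determined by the tuple $(b_1,\dots,b_{m_P})$ of its \emph{free} values — the distinct values at coordinates that neither repeat an earlier coordinate nor lie in $D$, listed in order of first appearance — and this free tuple ranges over all $m_P$-tuples of pairwise distinct elements of $A\setminus D$. Note that $m_P=n$ exactly for the \emph{generic} pattern (all entries distinct, none in $D$), and $m_P\le n-1$ otherwise. To each pattern $P$ attach a tuple $(e_1^P,\dots,e_{\,n+1-m_P}^P)$ of distinct elements of $D$ — for the generic pattern this is a single element, say $d_0$ — and send a tuple of pattern $P$ with free values $(b_1,\dots,b_{m_P})$ to the $(n{+}1)$-cycle
\[
\sigma_{\bar a}=(b_1;\dots;b_{m_P};e_1^P;\dots;e_{\,n+1-m_P}^P).
\]
This permutation moves exactly $n+1$ points, so it lies in $\cals_{n+1}(A)$; since the free values avoid $D$, we have $\mov(\sigma_{\bar a})\cap D=\{e_1^P,\dots,e^P_{\,n+1-m_P}\}$; and once $P$ is known, $\bar a$ is recovered by reading off $b_i=\sigma_{\bar a}^{\,i}(e^P_{\,n+1-m_P})$ for $i\le m_P$ and then reinstating the repeated and pinned coordinates. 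Hence $\bar a\mapsto\sigma_{\bar a}$ is injective on each pattern class.

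It remains to arrange that the ranges over distinct patterns are disjoint, so that the glued map is the desired injection $A^n\inj\cals_{n+1}(A)$. Because a $\sigma$ in the range over $P$ satisfies $\mov(\sigma)\cap D=\{e^P_j\}$, it is enough to choose the assignment $P\mapsto\{e_1^P,\dots,e^P_{\,n+1-m_P}\}$ to be injective on the (finite) set of patterns; this is possible for $N$ large because $D$ has far more subsets than there are patterns. After committing to such a choice, one checks that a permutation lying in two of the glued ranges would have the same $D$-trace in both, forcing the two patterns to coincide, whereupon injectivity within the class finishes the argument.

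The main obstacle is precisely this finite but fiddly bookkeeping — setting up the patterns so that the free values are genuinely free and well-defined, choosing $N$ and the auxiliary tuples so that the $D$-traces separate the patterns, and verifying that nothing from one pattern's range sneaks into another's. Everything else (that $(n{+}1)$-cycles move exactly $n+1$ points, that the hypothesis $n\ne0$ makes $\cals_{n+1}(A)$ nonempty, that the recovery procedure is well-defined) is routine. I would first fix the pattern decomposition and the recovery procedure in the abstract, then make a concrete choice of $D$ and of the auxiliary tuples, and only then run the disjointness verification.
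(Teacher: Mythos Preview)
The paper does not prove this lemma in-text; it simply cites \cite{Shen2023b}. Your plan is correct in outline: stratify $A^n$ by the finite ``pattern'' of a tuple (its coincidence partition together with the record of which entries lie in a fixed finite reservoir $D$ and what values they take there), encode the remaining free entries as the leading arc of an $(n{+}1)$-cycle, and pad with pattern-specific markers from $D$ so that $\mov(\sigma)\cap D$ recovers the pattern and the cycle then recovers the free entries. Two points need tightening. First, the concrete choice $N=2n+1$ is too small: already at $n=2$ (so $|D|=6$) there are $6^2=36$ patterns with $m_P=0$ but only $\binom{6}{3}=20$ three-element subsets of $D$ to serve as their traces. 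Second, the line ``$D$ has far more subsets than there are patterns'' hides a real issue, since the number of patterns itself grows with $|D|$ (a pattern records specific elements of $D$). The correct count is that patterns with $m_P=m$ number $O_n(|D|^{\,n-m})$ while $(n{+}1{-}m)$-subsets of $D$ number $\Theta_n(|D|^{\,n+1-m})$, so for each $m$ the latter eventually dominates and a sufficiently large $N=N(n)$ does work---but this should be said explicitly. With these two fixes the argument goes through.

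For comparison, the construction the paper points to---whose mechanism is replayed in the proof of Theorem~\ref{main3}---replaces your global pattern table by a pigeonhole-and-conjugate device: one prepares $n{+}1$ disjoint finite sets $H_0,\dots,H_n$ where each $H_i$ carries labelled copies $b_{i,x}$ of every $x\in\bigcup_{k<i}H_k$ together with extra marker elements, finds the least $H_i$ disjoint from the given support, pushes any entries lying in lower $H_k$ to their copies in $H_i$, and then adjoins the markers. That route keeps the reservoir of size $O(2^n)$ and handles collisions with the reservoir uniformly without a case analysis; your route trades a larger reservoir and a finite case split for a more transparent encode/decode step. Either is acceptable.
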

\begin{proof}
	See~\cite{Shen2023b}.
\end{proof}

\begin{lemma}\label{SY2020}
	For each infinite set $A$, if $\omega\inj\cals(A)$, then $\cals(A)\not\fto\calsf(A)$.
\end{lemma}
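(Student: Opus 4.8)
The plan is to argue by contradiction: assume $\omega\inj\cals(A)$ and suppose that $f$ is a finite-to-one function from $\cals(A)$ to $\calsf(A)$. A convenient first move is to reduce the codomain to $\fin(A)$. The map $s\mapsto\mov(s)$ is a finite-to-one function from $\calsf(A)$ to $\fin(A)$, since for a finite $F\subseteq A$ the fibre over $F$ consists of the finitely many fixed-point-free permutations of $F$ (extended by the identity off $F$). Composing, $g:=(s\mapsto\mov(s))\circ f$ is a finite-to-one function from $\cals(A)$ to $\fin(A)$, so it suffices to derive a contradiction from ``$\omega\inj\cals(A)$ and $\cals(A)\fto\fin(A)$''.

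The core of the argument must then exploit $\omega\inj\cals(A)$ to witness that $\cals(A)$ is, in the relevant finite-to-one sense, strictly larger than $\fin(A)$ --- a Cantor/Forster-type phenomenon. Fixing distinct $t_0,t_1,\dots\in\cals(A)$ (and replacing $t_n$ by $t_0^{-1}t_n$, so that $t_0=\id$ and $t_n\ne\id$ for $n\ge1$), I would use this sequence to manufacture, inside $\cals(A)$, a family of permutations so rich that $g$ cannot collapse it finite-to-one into $\fin(A)$. The natural route is to adapt the Dawson--Howard argument for $\cals(A)\not\inj A$ and its refinements (to $A^n$): one builds, by a recursion driven by $g$ and the sequence $\langle t_n\rangle$, a permutation $\sigma$ of $A$ such that $g(\sigma)$ both must and cannot lie in a finite subset of $A$ associated to $\sigma$ --- the finite-to-one analogue of Zermelo's diagonal set, with the $t_n$ used precisely to absorb the fibres of $g$.

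I expect the main obstacle to be exactly this last step, because of two genuine weaknesses of the hypothesis. First, $g$ is only finite-to-one, not injective, so the clean diagonal set has to be replaced by a construction that folds whole (finite) fibres into the diagonal object, and it is the Dedekind-infiniteness of $\cals(A)$ that must supply the room to do so. Second, $\omega\inj\cals(A)$ does \emph{not} yield $\omega\inj A$; the lemma must remain true when $A$ itself is Dedekind-finite (e.g.\ Russell-type sets of socks), and it genuinely fails without the hypothesis (witness the model of Shen and Yuan in which $\cals(A)\fto A$), so one cannot pass to an infinite subset of $A$ and argue there --- the whole construction has to be carried out with permutations. Reconciling these two demands is the heart of the proof; once a suitably independent infinite family of permutations has been extracted and dovetailed with $g$, the resulting Cantor/Forster-type contradiction should be routine.
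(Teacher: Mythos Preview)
The paper does not actually prove this lemma: its entire proof is the citation ``See~\cite[Theorem~3.20]{ShenYuan2020}.'' So there is no in-paper argument against which to compare your proposal on technique.

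That said, what you have written is a proof \emph{strategy}, not a proof. The reduction from $\calsf(A)$ to $\fin(A)$ via $s\mapsto\mov(s)$ is correct and worth keeping: the fibre over a finite $F\subseteq A$ consists of the derangements of $F$ extended by the identity on $A\setminus F$, hence is finite. After that, however, the proposal becomes aspirational. You describe the \emph{shape} of a Cantor/Forster-style contradiction --- build by recursion, using $g$ and the sequence $\langle t_n\rangle$, a permutation $\sigma$ whose $g$-image is forced into an impossible position --- but you do not construct anything. You do not say how the $t_n$ are to ``absorb the fibres of $g$,'' what the recursion actually produces at each stage, or why the resulting object yields a contradiction; and you explicitly flag this step as ``the main obstacle'' and ``the heart of the proof.'' The difficulty you correctly diagnose --- that $\omega\inj\cals(A)$ need not give $\omega\inj A$, so one cannot simply diagonalize inside $A$ --- is exactly the point that demands a concrete construction, and your outline supplies none.

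To turn this into a proof you must exhibit the recursion: for instance, show how from $g$ and the $t_n$ one manufactures either an injection $\omega\to A$ (after which stronger conclusions are available) or directly an infinite family of permutations with a common $g$-value, contradicting finite-to-oneness. The cited source carries this out; your proposal stops short of it.
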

\begin{proof}
	See~\cite[Theorem~3.20]{ShenYuan2020}.
\end{proof}

\begin{lemma}\label{S2024}
	For each infinite set $A$, if $\omega\inj\calb(A)$, then $\calb(A)\not\fto\fin(A)$.
\end{lemma}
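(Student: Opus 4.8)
The plan is to follow the template of Lemma~\ref{SY2020}, with partitions of $A$ in place of permutations of $A$. Assume $\omega\inj\calb(A)$ and suppose, towards a contradiction, that $f\colon\calb(A)\to\fin(A)$ is finite-to-one. One case is cheap: since $\omega\inj\calb(A)$, the set $\calb(A)$ is Dedekind-infinite, and a finite-to-one image of a Dedekind-infinite set is again Dedekind-infinite (run $f$ along a fixed injective $\omega$-sequence in $\calb(A)$ and delete repetitions); so if $\fin(A)$ were Dedekind-finite there could be no such $f$. Hence we may assume $\fin(A)$ is Dedekind-infinite, i.e.\ $\omega\inj\fin(A)$; in particular we have at hand a genuine $\omega$-sequence of pairwise distinct finite subsets of $A$, not merely an infinite set of them.

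Next I would extract a combinatorial skeleton. Each partition in $\calb(A)$ is determined by its set of non-singleton blocks, a set of pairwise disjoint finite subsets of $A$ each of size at least~$2$, and distinct partitions give distinct such sets; feeding in $\omega\inj\fin(A)$ and recursing carefully along $\omega$, one should obtain an $\omega$-indexed family $\langle D_n:n\in\omega\rangle$ of pairwise disjoint finite subsets of $A$ with $2\le|D_n|$. From such a family one manufactures partitions freely: for $X\subseteq\omega$, let $Q_X\in\calb(A)$ be the partition whose non-singleton blocks are exactly $\{D_n:n\in X\}$. Since the $D_n$ are pairwise disjoint, $X\mapsto Q_X$ is an injection of $\calp(\omega)$ into $\calb(A)$ which moreover preserves unions and intersections, and $Q_X$ agrees with $Q_{X'}$ off the finite set $\bigcup_{n\in X\mathbin{\triangle}X'}D_n$. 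Composing, $h(X):=f(Q_X)$ is a finite-to-one map $\calp(\omega)\to\fin(A)$ inheriting this same ``locality''.

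The crux, which I expect to be by far the hardest step, is to convert $h$ into an outright contradiction with the finite-to-one version of Cantor's theorem --- Forster's result~\cite{Forster2003} that $\calp(\omega)\not\fto\omega$. The locality of $h$ should fuel a compression argument: as $X$ ranges over the finitely many modifications of a fixed set supported inside a fixed finite $F\subseteq\omega$, the partition $Q_X$ changes only on the finite region $\bigcup_{n\in F}D_n$, and --- because $f$ sends each value in $\fin(A)$ to only finitely many partitions --- this should limit how much of $X$ the value $h(X)$ can record, eventually factoring $h$, up to finite ambiguity, through a map $X\mapsto X\cap E$ for some fixed countable $E\subseteq\omega$, hence through a copy of $\fin(\omega)$; as $\fin(\omega)$ is countable, this contradicts $\calp(\omega)\not\fto\omega$. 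Making this ``bounded information'' heuristic precise in $\ZF$, with no recourse to choice --- and likewise carrying out the extraction of $\langle D_n\rangle$ when $A$ itself is merely Dedekind-finite --- is where the real work lies; this is the same kind of delicacy that the permutation analogue underlying Lemma~\ref{SY2020} must also overcome.
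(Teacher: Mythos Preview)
The paper does not prove this lemma; it merely cites \cite[Theorem~4.8]{Shen2024}, so there is no in-paper argument to compare against. Your scaffolding through the construction of $h$ is sound: from $\omega\inj\calb(A)$ and a putative finite-to-one $f\colon\calb(A)\to\fin(A)$ you correctly get $\omega\inj\fin(A)$; from an injective $\omega$-sequence in $\fin(A)$ one can indeed extract in $\ZF$ (successive differencing, then pairing consecutive nonempty remainders) a sequence $\langle D_n\rangle$ of pairwise disjoint subsets of $A$ of size $\geq 2$; and then $X\mapsto Q_X$ injects $\calp(\omega)$ into $\calb(A)$, making $h(X)=f(Q_X)$ a finite-to-one map $\calp(\omega)\to\fin(A)$.

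The genuine gap is your final step. The ``locality'' you observe --- that $Q_X$ and $Q_{X'}$ agree off a finite region when $X\mathbin{\triangle}X'$ is finite --- places no constraint on an \emph{arbitrary} finite-to-one $f$: nothing prevents $f(Q_X)$ from encoding global information about $X$. Your claim that finite-to-one-ness forces $h$ to factor, up to finite ambiguity, through $X\mapsto X\cap E$ ``hence through a copy of $\fin(\omega)$'' is both unsupported and internally confused (a countable $E\subseteq\omega$ is no restriction at all; only a \emph{finite} $E$ would land you in $\fin(\omega)$). Most tellingly, the endpoint you reach --- a finite-to-one map $\calp(\omega)\to\fin(A)$ --- is not by itself contradictory: take $A=\calp(\omega)$ and $h(X)=\{X\}$. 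So Forster's theorem cannot be invoked from this data alone. You flag this step as ``where the real work lies,'' but as written it is a hope, not an argument, and the heuristic pointing toward it does not survive scrutiny.
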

\begin{proof}
	See~\cite[Theorem~4.8]{Shen2024}.
\end{proof}

\section{Two theorems on boundedly finite-to-one functions}
\begin{theorem}\label{main1}
	For each infinite set $A$ and natural number $n$, $\cals(A)\not\bfto\cals_{\leq n}(A)$.
\end{theorem}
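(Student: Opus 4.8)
The plan is to argue by contradiction.

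The plan is to argue by contradiction. Suppose $f\colon\cals(A)\to\cals_{\leq n}(A)$ is boundedly finite-to-one, say at most $k$-to-one. Since $\cals_{\leq n}(A)\subseteq\calsf(A)$, composing $f$ with this inclusion yields a (boundedly) finite-to-one function from $\cals(A)$ to $\calsf(A)$, so in particular $\cals(A)\fto\calsf(A)$; by the contrapositive of Lemma~\ref{SY2020}, this forces $\omega\not\inj\cals(A)$. Hence the task reduces to deriving a contradiction under the extra hypothesis that $\cals(A)$ is Dedekind-finite.

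By Fact~\ref{fact1} we may compose $f$ with a boundedly finite-to-one map $\cals_{\leq n}(A)\to\fin(A)$ and obtain a boundedly finite-to-one $g\colon\cals(A)\to\fin(A)$, say at most $K$-to-one. I would then run a counting-and-filtration argument. For finite $F\subseteq A$, put $\cals_F=\{\sigma\in\cals(A)\mid\mov(\sigma)\subseteq F\}$; each such $\sigma$ is determined by $\sigma\rstr F$, a permutation of $F$, so $\cals_F$ is finite with exactly $|F|!$ elements, and hence well-orderable. Since $g$ is at most $K$-to-one, $|g[\cals_F]|\geq|F|!/K$, and as every member of $g[\cals_F]$ is a subset of the finite set $V_F:=\bigcup g[\cals_F]$, we get $2^{|V_F|}\geq|F|!/K$. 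Because $\log_2(m!)$ eventually dominates $m$, there is a threshold $t$ depending only on $K$ such that whenever $|F|\geq t$ one has both $|V_F|>|F|$ and $|g[\cals_F]|>2^{|F|}$, the latter giving some $\sigma\in\cals_F$ with $g(\sigma)\not\subseteq F$. Fixing any $F_0\subseteq A$ with $|F_0|=t$ and setting $F_{i+1}=F_i\cup V_{F_i}$, we obtain a strictly increasing $\omega$-chain $F_0\subsetneq F_1\subsetneq\cdots$ of finite subsets of $A$ (uniquely determined by $F_0$ and $g$), whose increments $H_i:=F_{i+1}\setminus F_i$ are pairwise disjoint and nonempty, and at each stage $i$ there is $\sigma\in\cals_{F_i}$ with $g(\sigma)\subseteq V_{F_i}\subseteq F_{i+1}$ but $g(\sigma)\not\subseteq F_i$; a choice of one such $\sigma_i$ at each stage would automatically be a sequence of pairwise distinct permutations, witnessing $\omega\inj\cals(A)$ and contradicting the Dedekind-finiteness established above.

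The main obstacle is exactly that last choice: producing the permutations $\sigma_i$ in $\ZF$. The difficulty is one of definability --- a finite subset of $A$ carries no canonical linear order, so one can neither ``list'' the elements entering the filtration at each stage nor select $\sigma_i$ from the finite set $\{\sigma\in\cals_{F_i}\mid g(\sigma)\not\subseteq F_i\}$ without additional structure, and indeed the bare chain $\langle F_i\rangle$ by itself is not enough. The plan is to carry out the filtration so that every $F_i$ comes with a canonical well-ordering: this induces a canonical well-ordering of $\cals_{F_i}$, so the $<$-least escaping $\sigma_i$ becomes well-defined, and the real work is to extend the well-ordering of $F_i$ to $F_{i+1}$ using only genuinely available data, for instance by chaining it through the cycle structure of $\sigma_i$, or of $f(\sigma_i)$ relative to $F_i$, or by arranging to adjoin only a $\sigma_i$ one of whose orbits already meets $F_i$. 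When infinitely many increments $H_i$ are singletons this step is free, since then $\omega\inj A$, and $A\inj\cals(A)$ via $a\mapsto(a_0;a)$ for $a\neq a_0$ together with $a_0\mapsto\id$, for any fixed $a_0\in A$; the substantive case is when the increments are larger, and I expect making the selections canonically there --- equivalently, in view of Lemma~\ref{lemma2}, exhibiting an $\omega$-indexed family of distinct $m$-element subsets of $A$ for some fixed $m$ --- to be the technical heart of the proof.
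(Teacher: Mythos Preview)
Your reduction via Lemma~\ref{SY2020} matches the paper's exactly: it suffices to build an injection $\omega\to\cals(A)$. But the gap you yourself flag is essentially the entire content of the proof, and none of the mechanisms you suggest for propagating a well-order along the filtration survives the bad case. If $\mov(f(\sigma))$ happens to be disjoint from $F_i$, its cycles carry only cyclic, not linear, orders, and there is no anchor in already-ordered territory; your first suggestion (the cycle structure of $\sigma_i$) is a non-starter since $\sigma_i\in\cals_{F_i}$ has $\mov(\sigma_i)\subseteq F_i$ and says nothing about the new points, and the detour through $\fin(A)$ via Fact~\ref{fact1} only discards structure you will need.

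The paper does not try to well-order finite subsets of $A$ at all. Instead, at stage $m$ it extracts from $s_i:=f(g(i))$, $i<m$, a sequence $t_0,\dots,t_{\lfloor\log_2 m\rfloor}$ of nontrivial permutations in $\cals_{\leq 2n}(A)$ with pairwise disjoint supports; products over subfamilies then yield more than $m$ distinct elements of $\cals(A)$, and the least new one becomes $g(m)$. The point is how to produce $t_l$ with support disjoint from $C=\bigcup_{i<l}\mov(t_i)$. Either some $s_i$ moves a point of $A\setminus C$ into $A\setminus C$, and one restricts $s_i$ to $A\setminus C$; or else a counting bound $|\{s_i:i<m\}|\leq(2nl)^{2n}$ (proved by showing $s\mapsto\langle s\rhd C,\{x\in C\mid s(x)\notin C\}\rangle$ is injective on $\{s_i\}$), combined with $2^l\leq m\leq k\cdot|\{s_i\}|$, forces the existence of $i<j$ and $x\in C$ with $s_i(x),s_j(x)$ distinct elements of $A\setminus C$, and one restricts the \emph{quotient} $s_j\circ s_i^{-1}$ to $A\setminus C$. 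This last manoeuvre---manufacturing nontrivial motion outside $C$ from a difference of two permutations, each of which only sends $C$-points out to $A\setminus C$---is precisely the idea missing from your sketch, and it genuinely uses the group structure of $\cals_{\leq n}(A)$ that the passage to $\fin(A)$ throws away.
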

Before we prove the theorem, we introduce a notation $\rhd$. For each $X\subseteq A$ and $s\in\cals(A)$ that moves only finitely many points, $s\rhd X$ is the permutation of $X$ defined by $(s\rhd X)(x)=s^{i+1}(x)$, where $i$ is the least natural number such that $s^{i+1}(x)\in X$.
\begin{proof}
	Assume on the contrary that there are an infinite set $A$ and a natural number $n$ such that $\cals(A)\bfto\cals_{\leq n}(A)$. Let $f$ be a function from $\cals(A)$ to $\cals_{\leq n}(A)$ such that each point has at most $k$ inverse images, where $k$ is a fixed natural number. By Lemma~\ref{SY2020}, the existence of an injection from $\omega$ to $\cals(A)$ yields a contradiction. We construct such an injection $g$ by recursion as follows.
	
	Since
	\[
	\lim_{l\to\infty}\frac{2^l}{l^{2n}}=\infty,
	\]
	there is an $l_0\in\omega$ such that for all $l>l_0$ we have $\frac{2^l}{l^{2n}}>k(2n)^{2n}$, that is,
	\begin{equation}\label{eq01}
		k(2nl)^{2n}<2^l.
	\end{equation}
	Let $m_0=k(2nl_0)^{2n}$. Since $A$ is infinite, we can pick pairwise distinct elements $g(0),\dots,g(m_0)$ of $\cals(A)$.
	For $m>m_0$, we assume $g\rstr m$ is already defined, and now define $g(m)$.
	
	For each $i<m$, let $s_i=f(g(i))$, and write $B=\{s_i\mid i<m\}\subseteq\cals_{\leq n}(A)$. Clearly,
	\begin{equation}\label{eq02}
		m\leq k|B|.
	\end{equation}
	We define by recursion a sequence $\langle t_l\mid l\leq\log_2m\rangle$ of non-trivial permutations in $\cals_{\leq 2n}(A)$ with pairwise disjoint sets of non-fixed points. Let $l\leq\log_2m$ and assume that $t_i$ is already defined for all $i<l$. Let $C=\bigcup_{i<l}\mov(t_i)$. For all $i<l$, $|\mov(t_i)|\leq 2n$, so
	\begin{equation}\label{eq03}
		|C|\leq 2nl.
	\end{equation}
	Consider the following two cases:
	
	Case 1. There is a least $i<m$ such that, for some $x\in\mov(s_i)\setminus C,s_i(x)\in A\setminus C$. Then we define
	\[
		t_l=(s_i\rhd (A\setminus C))\cup\id_{C}.
	\]
	It is easy to see that $t_l$ is a non-trivial permutation in $\cals_{\leq n}(A)\subseteq\cals_{\leq 2n}(A)$ such that $\mov(t_l)\cap \mov(t_i)=\varnothing$ for all $i<l$.
	
	Case 2. Otherwise; that is,
	\begin{equation}\label{pr01}
		\text{for all $i<m$ and $x\in\mov(s_i)\setminus C$, $s_i(x)\in C$.}
	\end{equation}
	In this case, we claim that there are $i,j<m$ and $x\in C$ such that $s_i(x)$ and $s_j(x)$ are distinct elements of $A\setminus C$.
	
	Assume on the contrary that
	\begin{equation}\label{pr02}
		\text{for all }i,j<m\text{ and }x\in C,\text{ if }s_i(x),s_j(x)\in A\setminus C,\text{ then }s_i(x)=s_j(x).
	\end{equation}
	Consider the function $h$ on $B$ defined by
	\[
		h(s)=\langle s\rhd C,\{x\in C\mid s(x)\in A\setminus C\}\rangle.
	\]
	
	We prove that $h$ is injective as follows. Let $i,j<m$ be such that $h(s_i)=h(s_j)$. Take an arbitrary $z\in A$. We have to prove $s_i(z)=s_j(z)$. Consider the following four cases.
	\begin{enumerate}[label=\upshape(\roman*), leftmargin=*, widest=iii]
		\item If $z\in C$ and $s_i(z)\in C$, since $h(s_i)=h(s_j)$, $s_j(z)\in C$ and so $s_i(z)=(s_i\rhd C)(z)=(s_j\rhd C)(z)=s_j(z)$.
		\item If $z\in C$ and $s_i(z)\in A\setminus C$, since $h(s_i)=h(s_j)$, $s_j(z)\in A\setminus C$ and so $s_i(z)=s_j(z)$ by~\eqref{pr02}.
		\item Let $z\in\mov(s_i)\setminus C$. Then $s_i(z),s_i^{-1}(z)\in C$ by~\eqref{pr01}, and so $s_i(s_i^{-1}(z))=s_j(s_i^{-1}(z))$ by (ii) with $z$ replaced by $s_i^{-1}(z)$. Now $z=s_j(s_i^{-1}(z))=s_j(s_j^{-1}(z))$,
		which means $s_i^{-1}(z)=s_j^{-1}(z)$. Hence $z\in\mov(s_j)\setminus C$, and then
		$s_j(z)\in C$ by~\eqref{pr01}. Thus we have $s_i(z)=(s_i\rhd C)(s_i^{-1}(z))=(s_j\rhd C)(s_j^{-1}(z))=s_j(z)$.
		\item Since the proof of (iii) is symmetric, $z\notin\mov(s_i)\cup C$ implies $z\notin\mov(s_j)\cup C$, and then $s_i(z)=s_j(z)=z$.
	\end{enumerate}

	Therefore, $h$ is injective. Note that, for each $s\in B$, $s\rhd C\in\cals_{\leq n}(C)$ and
    $\{x\in C\mid s(x)\in A\setminus C\}\subseteq C\cap\mov(s)$. Since $|\cals_{\leq n}(C)|\leq\binom{|C|}{n}\cdot n!$, we have, by~\eqref{eq03},
	\[
		|B|\leq\binom{2nl}{n}\cdot n!\cdot\binom{2nl}{n}\leq(2nl)^{2n}.
	\]
	But then by~\eqref{eq02} we have
	\[
		k(2nl_0)^{2n}=m_0<m\leq k|B|\leq k(2nl)^{2n},
	\]
	so $l_0<l$, which means
	\[
		m\leq k(2nl)^{2n}<2^l
	\]
	by~\eqref{eq01}, contradicting $l\leq\log_2m$.
	
	Thus there are least $i,j<m$ with $i<j$ such that, for some $x\in C$, $s_i(x)$ and $s_j(x)$ are distinct elements of $A\setminus C$. Define
	\[
		t_l=((s_j\circ s_i^{-1})\rhd(A\setminus C))\cup\id_C.
	\]
	$t_l$ is clearly non-trivial, and since $\mov(t_l)\subseteq(\mov(s_i)\cup\mov(s_j))\cap (A\setminus C)$, it follows that $t_l\in\cals_{\leq 2n}(A)$ and $\mov(t_l)\cap \mov(t_i)=\varnothing$ for all $i<l$.
	
	Given the sequence $\langle t_l\mid l\leq\log_2m\rangle$, we can define a natural injection $F$ from $\calp(\lfloor\log_2m\rfloor+1)$ to $\cals(A)$ by
	\[
		F(a)=\bigcup_{i\in a}(t_i\rhd\mov(t_i))\cup\id_{A\setminus\bigcup_{i\in a}\mov(t_i)}.
	\]
	So we build at least $2^{\lfloor\log_2m\rfloor+1}>m$ many distinct permutations of $A$. Now there must be a least $a\subseteq\lfloor\log_2m\rfloor+1$ under the lexicographical ordering such that $F(a)\notin g[m]$. Take $g(m)=F(a)$ and we are done.
\end{proof}

\begin{corollary}
	For each infinite set $A$ and natural number $n$, $\cals(A)\not\bfto A^n$.
\end{corollary}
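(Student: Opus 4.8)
The plan is to reduce the corollary to Theorem~\ref{main1} by composing a hypothetical boundedly finite-to-one function witnessing $\cals(A)\bfto A^n$ with an injection of $A^n$ into $\cals_{\leq m}(A)$ for a suitable $m$. Since every injection is boundedly finite-to-one (with bound $1$) and compositions of boundedly finite-to-one functions are again boundedly finite-to-one, this will yield $\cals(A)\bfto\cals_{\leq m}(A)$, contradicting Theorem~\ref{main1}.

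So the first and only substantive step is to exhibit, for each natural number $n$, some natural number $m$ with $A^n\inj\cals_{\leq m}(A)$. For $n\geq 1$ this is exactly Lemma~\ref{lemma2}, which gives $A^n\inj\cals_{n+1}(A)\subseteq\cals_{\leq n+1}(A)$, so one may take $m=n+1$. For $n=0$ the set $A^0$ is a singleton, and since no permutation of $A$ moves exactly one point we have $\cals_{\leq 1}(A)=\{\id_A\}$; hence $A^0\inj\cals_{\leq 1}(A)$ trivially, and again $m=1$ works.

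Putting these together: assume toward a contradiction that $\cals(A)\bfto A^n$ for some infinite set $A$ and natural number $n$. By the previous paragraph $A^n\inj\cals_{\leq n+1}(A)$, hence $A^n\bfto\cals_{\leq n+1}(A)$, and therefore $\cals(A)\bfto\cals_{\leq n+1}(A)$, contradicting Theorem~\ref{main1}. I do not anticipate any genuine obstacle here; all the real content already resides in Theorem~\ref{main1} and Lemma~\ref{lemma2}, and the only point requiring a moment's care is the degenerate case $n=0$, which is disposed of by the observation that $\cals_{\leq 1}(A)=\cals_{\leq 0}(A)=\{\id_A\}$.
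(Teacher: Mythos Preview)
Your proof is correct and follows exactly the paper's approach, which simply cites Lemma~\ref{lemma2} and Theorem~\ref{main1}. You are in fact slightly more careful than the paper in handling the degenerate case $n=0$, which Lemma~\ref{lemma2} does not literally cover.
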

\begin{proof}
	By Lemma~\ref{lemma2} and Theorem~\ref{main1}.
\end{proof}

\begin{corollary}\label{co01}
	For each infinite set $A$, $\cals(A)\not\bfto A$.
\end{corollary}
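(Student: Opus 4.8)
The plan is to deduce this immediately from Theorem~\ref{main1}, in exactly the way the preceding corollary was obtained. Since $A^1=A$, the statement is in fact the $n=1$ instance of the preceding corollary, so strictly speaking nothing new needs to be said; but I would spell out the short argument for the reader's convenience. The only ingredient beyond Theorem~\ref{main1} that I need is an injection from $A$ into some $\cals_{\leq n}(A)$, and this is provided by Lemma~\ref{lemma2}.

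Concretely, I would first apply Lemma~\ref{lemma2} with $n=1$ to obtain an injection $j$ from $A$ into $\cals_2(A)$, and hence (since $\cals_2(A)\subseteq\cals_{\leq 2}(A)$) an injection from $A$ into $\cals_{\leq 2}(A)$. Then I would argue by contradiction: assume $g$ is a boundedly finite-to-one function from $\cals(A)$ to $A$. Since $j$ is an injection, it is in particular boundedly finite-to-one, so $j\circ g$ is a boundedly finite-to-one function from $\cals(A)$ to $\cals_{\leq 2}(A)$ (using that compositions of boundedly finite-to-one functions are boundedly finite-to-one, as noted in the preliminaries). This contradicts Theorem~\ref{main1} with $n=2$, and the corollary follows.

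I do not expect any genuine obstacle here: the argument is a routine composition with an injection, and all the real work has already been done in Theorem~\ref{main1} and Lemma~\ref{lemma2}. The only point worth a moment's care is to confirm that composing a boundedly finite-to-one function with an injection on the range side keeps the same bound $k$ on the number of preimages, which is immediate since the preimage of a point under the injection is empty or a singleton.
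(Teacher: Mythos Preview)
Your proposal is correct and follows exactly the paper's approach: the paper states this corollary without proof, since it is just the $n=1$ instance of the preceding corollary (which is proved by combining Lemma~\ref{lemma2} with Theorem~\ref{main1}). Your write-up simply unpacks that deduction explicitly.
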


It is shown in~\cite[Theorem~4.16]{ShenYuan2020b} that, in Corollary~\ref{co01}, $\not\bfto$ cannot be replaced by $\not\fto$.

\begin{theorem}\label{main2}
	For each infinite set $A$, $\calb(A)\not\bfto\fin(A)$.
\end{theorem}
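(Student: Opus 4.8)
The plan is to mimic the proof of Theorem~\ref{main1}. Suppose $\calb(A)\bfto\fin(A)$ and fix $f\colon\calb(A)\to\fin(A)$ with every fibre of size at most a fixed $k\in\omega$. Since this already gives $\calb(A)\fto\fin(A)$, Lemma~\ref{S2024} reduces the theorem to constructing an injection $g\colon\omega\to\calb(A)$, and $f$ is the only extra tool the construction may use. As in Theorem~\ref{main1}, I would build $g$ by recursion: having defined $g\rstr m$ for $m$ large, set $B=\{f(g(i))\mid i<m\}\subseteq\fin(A)$ and note $m\leq k\lvert B\rvert$ because $f$ is $k$-to-one and the $g(i)$ are pairwise distinct; the aim at stage $m$ is to produce, definably from $g\rstr m$, a canonically well-ordered family of more than $m$ partitions of $A$, and then to let $g(m)$ be the least member of that family not already in $g[m]$.

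The family would be assembled from pairwise disjoint finite ``gadgets''. If I can extract, definably from $g\rstr m$, finite sets $T_1,\dots,T_L\subseteq A$ that are pairwise disjoint, each of size at least $2$, with $L>\log_2 m$, then for $c\subseteq\{1,\dots,L\}$ let $F(c)\in\calb(A)$ be the partition whose blocks are the $T_l$ with $l\in c$ together with singletons of every remaining point of $A$. The map $c\mapsto F(c)$ is injective (from $F(c)$ one reads off $c$ as $\{l\mid T_l\text{ is a block of }F(c)\}$), so this is a canonically indexed family of $2^{L}>m$ partitions, as required. Extracting the gadgets is thus the whole problem, and I would do it by a sub-recursion echoing Cases~1 and~2 of the proof of Theorem~\ref{main1}, relative to the already-used set $C=\bigcup_{i\le l}T_i$: if some $p\in B$ meets $A\setminus C$ in at least two points, put $T_{l+1}=p\setminus C$ for the least such $p$ in the canonical ordering of $B$ induced by $p\mapsto\min f^{-1}[\{p\}]$; otherwise, if two distinct members of $B$ agree on $C$ but have distinct nonempty (hence one-point) traces on $A\setminus C$, let $T_{l+1}$ be the union of those two traces; if neither happens, stop. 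When the sub-recursion stops, $p\mapsto\langle p\cap C,p\setminus C\rangle$ is injective on $B$, and the failure of both cases forces, for every $q\subseteq C$, at most two members of $B$ with $p\cap C=q$; hence $\lvert B\rvert\leq 2^{\lvert C\rvert+1}$.

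If the gadget sizes could be kept bounded, so that $\lvert C\rvert=O(\log m)$ whenever the sub-recursion stops, the bound $\lvert B\rvert\leq 2^{\lvert C\rvert+1}$ would be polynomial in $\log m$, and exactly as in Theorem~\ref{main1}---fixing an $l_0$ past which that polynomial bound falls below $2^{l}$, and an $m_0$ large enough to dispose of the first few steps---the inequalities $m\leq k\lvert B\rvert$ and $l\leq\log_2 m$ would clash. The sub-recursion would then always reach $L>\log_2 m$ gadgets, and we would be done.

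That last step is exactly where the difficulty lies, and it is what makes $\calb(A)$ harder here than $\cals(A)$. In Theorem~\ref{main1} the gadgets have size at most the constant $2n$, so $C$ stays logarithmically small and the stalled case confines $B$ inside $\cals_{\leq n}(C)$, a set of merely polynomial size $\lvert C\rvert^{O(1)}$. For partitions the analogous confinement only puts $B$ inside $\calp(C)$, of size $2^{\lvert C\rvert}$, which is worthless unless $\lvert C\rvert$ is genuinely small; but a single large member of $B$ forces a large first gadget $p\setminus C$, hence a large $C$, and there is no canonical way to replace it by a small piece of $p$. The real work, then, is to make large gadgets carry proportionally much canonical structure---for instance by refining $p\setminus C$ along the canonical equivalence ``lies in exactly the same members of $B$'', whose classes are canonically ordered once $B$ is, and feeding the resulting ordered pieces back in as further gadgets---and then to redo the stalled-case counting with this finer invariant so that it still contradicts $\lvert B\rvert\geq m/k$ for all large $m$, all while keeping every choice definable from $g\rstr m$. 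Carrying out this refined extraction and the matching count is, I expect, the main obstacle.
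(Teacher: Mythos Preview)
Your scaffold matches the paper: reduce via Lemma~\ref{S2024} to building $g\colon\omega\to\calb(A)$, and at stage $m$ manufacture more than $m$ partitions of $A$ definably from $g\rstr m$. You also locate the right auxiliary object, the equivalence ``lies in exactly the same members of $B$'' on $\bigcup B$. But the proof is incomplete---you say so yourself---and the specific partition-generating scheme you propose, disjoint gadgets $T_1,\dots,T_L$ feeding the $2^L$ partitions $F(c)$, cannot close the gap no matter how the gadgets are extracted.

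Here is why. Let $D$ be the set of $\sim$-classes and $l=|D|$. Each $p\in B$ is a union of classes, so $B$ injects into $\calp(D)$ and $m\le k|B|\le k\,2^l$; this is the \emph{only} upper bound on $m$ the data provide, so you must produce strictly more than $k\,2^l$ partitions. Your gadgets, being disjoint unions of classes, number at most $l$, and only those of size $\ge 2$ contribute; hence $2^L\le 2^l$, short by at least the factor $k$ and usually by much more. The missing idea is to drop the $2^L$ scheme and use \emph{all} partitions of $D$: each $Q\in\calb(D)$ lifts to $\{\bigcup q:q\in Q\}\cup\{\{x\}:x\in A\setminus\bigcup B\}\in\calb(A)$, and distinct $Q$ give distinct lifts. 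This yields the Bell number $B_l$ of candidates, and (e.g.\ via Dobinski) $B_l>4^l/72>k\,2^l\ge m$ as soon as $2^l>72k$, which follows from $m>72k^2$ together with $m\le k\,2^l$. No Case~1/Case~2 sub-recursion is needed at all: form $D$ directly from $B=f[g[m]]$, well-order it canonically from the induced well-ordering of $B$, and let $g(m)$ be the lift of the least $Q\in\calb(D)$ not already in $g[m]$. This is precisely the paper's argument.
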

\begin{proof}
	Assume on the contrary that there is an infinite set $A$ such that $\calb(A)\bfto\fin(A)$. Let $f$ be a function from $\calb(A)$ to $\fin(A)$ such that each point has at most $k$ inverse images, where $k$ is a fixed natural number. By Lemma~\ref{S2024}, the existence of an injection from $\omega$ to $\calb(A)$ yields a contradiction. We construct such an injection $g$ by recursion as follows.
	
	Since $A$ is infinite, we can pick pairwise distinct elements $g(0),\dots,g(72k^2)$ of $\calb(A)$.
	For $m>72k^2$, we assume $g\rstr m$ is already defined, and now define $g(m)$.
	
	Write $C=f[g[m]]\subseteq\fin(A)$. Clearly,
	\begin{equation}\label{eq11}
		m\leq k|C|.
	\end{equation}
	We can easily induce a well-ordering $<_C$ of $C$ via $f,g$ and the natural ordering of $m$. Let $\sim$ be the equivalence relation on $\bigcup C$ defined by
	\[
	x\sim y\quad\text{if and only if}\quad\forall c\in C(x\in c\leftrightarrow y\in c).
	\]
	Let $D$ be the quotient set of $\bigcup C$ by $\sim$. Then we define a well-ordering $<_D$ of $D$ by
	\[
	u<_Dv\quad\text{if and only if}\quad\exists c\in C(u\nsubseteq c\wedge v\subseteq c\wedge\forall b<_Cc(u\subseteq b\leftrightarrow v\subseteq b)).
	\]
	Let $l=|D|$. Since there is an obvious injection from $C$ into $\calp(D)$, $|C|\le2^l$. By \eqref{eq11},
	\begin{equation}\label{eq12}
		m\leq k2^l.
	\end{equation}
	
	We write $B_n$ for the $n$-th Bell number for each $n\in\omega$. Then $|\calb(D)|=B_l$. By the Dobinski formula,
	\begin{equation}\label{eq13}
		B_l
		=\frac{1}{e}\sum_{i=0}^{\infty}\frac{i^l}{i!}
		>\frac{1}{e}\cdot\frac{4^l}{4!}
		>\frac{4^l}{72}.
	\end{equation}
	Recall that $m>72k^2$, which, together with \eqref{eq12}, implies $72k^2<k2^l$, that is,
	\begin{equation}\label{eq14}
		72k<2^l.
	\end{equation}
	But then by \eqref{eq12}, \eqref{eq13} and \eqref{eq14} we have
	\begin{equation*}
		m\leq k2^l<\frac{4^l}{72}<B_l.
	\end{equation*}
	That is, $|\calb(D)|>m$.
	
	The lexicographical ordering of $\calp(D)$ induced by $<_D$ is a well-ordering since $\calp(D)$ is finite, which induces a well-ordering $R$ of $\calp(\calp(D))$ in the same way. Since $\calb(D)\subseteq\calp(\calp(D))$ and $|\calb(D)|>m$, there is an $R$-least $Q\in\calb(D)$ such that
	\[
	\{\textstyle\bigcup q\mid q\in Q\}\cup\{\{x\}\mid x\in A\setminus\bigcup C\}
	\]
	is a partition in $\calb(A)\setminus g[m]$. Define $g(m)$ to be such a partition and we are done.
\end{proof}

\begin{corollary}\label{co10}
	For each infinite set $A$ and natural number $n$, $\calb(A)\not\bfto\cals_{\leq n}(A)$.
\end{corollary}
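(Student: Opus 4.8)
\section*{Proof proposal for Corollary~\ref{co10}}

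The plan is to reduce Corollary~\ref{co10} to Theorem~\ref{main2} by composing with the canonical boundedly finite-to-one map supplied by Fact~\ref{fact1}. Concretely, I would argue by contradiction: suppose $A$ is infinite, $n$ is a natural number, and $\calb(A)\bfto\cals_{\leq n}(A)$, witnessed by a function $f\colon\calb(A)\to\cals_{\leq n}(A)$ in which every point has at most $k$ inverse images for some fixed $k\in\omega$. Fact~\ref{fact1} gives a boundedly finite-to-one function $e\colon\cals_{\leq n}(A)\to\fin(A)$, namely $e(s)=\mov(s)$, with every point having at most (say) $k'$ inverse images, where $k'$ depends only on $n$.

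The key step is then that the composition $e\circ f\colon\calb(A)\to\fin(A)$ is again boundedly finite-to-one: for any $w\in\fin(A)$, the inverse image $(e\circ f)^{-1}[\{w\}]=f^{-1}[e^{-1}[\{w\}]]$ is a union of at most $k'$ sets of the form $f^{-1}[\{s\}]$, each of size at most $k$, so it has at most $kk'$ elements. This is precisely the remark in Section~2 that compositions of boundedly finite-to-one functions are boundedly finite-to-one, but I would spell out the bound $kk'$ explicitly here. Hence $\calb(A)\bfto\fin(A)$, which contradicts Theorem~\ref{main2}.

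There is no real obstacle in this argument; it is a routine composition. The only point worth a moment's care is that the bound $k'$ in Fact~\ref{fact1} is genuinely uniform (it can be taken to be $1$ in fact, since distinct permutations with the same support need not exist only when the support has size $0$ or $1$, but in general $\cals_n(A)$ fibers over $\fin(A)$ with fibers of size at most $n!$, so $k'\le n!$ suffices); this ensures the composite map is \emph{boundedly} finite-to-one rather than merely finite-to-one, which is exactly what is needed to invoke Theorem~\ref{main2} rather than the weaker Lemma~\ref{S2024}.
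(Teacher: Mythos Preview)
Your proposal is correct and follows exactly the paper's approach: compose a hypothetical boundedly finite-to-one map $\calb(A)\to\cals_{\leq n}(A)$ with the map $s\mapsto\mov(s)$ from Fact~\ref{fact1} to contradict Theorem~\ref{main2}. The parenthetical remark that $k'$ ``can be taken to be~$1$'' is not right (e.g., two distinct $3$-cycles share the same $\mov$ set), but you immediately supply the correct uniform bound $k'\le n!$, so the argument stands.
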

\begin{proof}
	By Fact~\ref{fact1} and Theorem~\ref{main2}.
\end{proof}
Corollary~\ref{co10} is a generalization of  \cite[Theorem~2.4]{SonVejja2023}.

\begin{corollary}
	For each infinite set $A$ and natural number $n$, $\calb(A)\not\bfto A^n$.
\end{corollary}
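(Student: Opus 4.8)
The plan is to reduce to Theorem~\ref{main2} by exhibiting a boundedly finite-to-one function from $A^n$ to $\fin(A)$. Consider the map $\Phi\colon A^n\to\fin(A)$ sending each tuple $\langle a_0,\dots,a_{n-1}\rangle$ to its underlying set $\{a_0,\dots,a_{n-1}\}$. For a finite set $b\subseteq A$ with $|b|=j$, the preimage $\Phi^{-1}[\{b\}]$ is empty unless $j\leq n$, in which case it consists exactly of the surjections from $n$ onto $b$, of which there are at most $j^n\leq n^n$. Hence $\Phi$ is boundedly finite-to-one with bound $n^n$, so $A^n\bfto\fin(A)$.

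Now suppose towards a contradiction that $\calb(A)\bfto A^n$ for some infinite set $A$. Composing a boundedly finite-to-one function from $\calb(A)$ to $A^n$ with $\Phi$ yields a boundedly finite-to-one function from $\calb(A)$ to $\fin(A)$, since compositions of boundedly finite-to-one functions are boundedly finite-to-one; that is, $\calb(A)\bfto\fin(A)$, contradicting Theorem~\ref{main2}.

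There is essentially no obstacle here: the only point to verify is the elementary bound on the fibers of $\Phi$. One could alternatively invoke Lemma~\ref{lemma2} together with Fact~\ref{fact1} to obtain $A^n\bfto\fin(A)$ when $n\neq 0$, and dispose of the case $n=0$ by noting that $A^0$ is a singleton while $\calb(A)$ is infinite (so no finite-to-one function from $\calb(A)$ to $A^0$ can exist); but the direct argument via $\Phi$ is cleaner and uniform in $n$.
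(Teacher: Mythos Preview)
Your proof is correct. The paper's route is slightly different: it cites Lemma~\ref{lemma2} (the injection $A^n\inj\cals_{n+1}(A)$, valid for $n\neq 0$) together with Corollary~\ref{co10} ($\calb(A)\not\bfto\cals_{\leq n}(A)$), the latter itself being Fact~\ref{fact1} composed with Theorem~\ref{main2}. Your argument bypasses both Lemma~\ref{lemma2} and the detour through $\cals_{\leq n}(A)$ by mapping $A^n$ straight into $\fin(A)$ via the underlying-set map $\Phi$ and then invoking Theorem~\ref{main2} directly. This is more elementary---Lemma~\ref{lemma2} is a nontrivial result imported from another paper---and, as you observe, it treats $n=0$ uniformly rather than as a separate case. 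The paper's path has the minor organizational virtue of reusing the just-proved Corollary~\ref{co10}, but your direct $\Phi$ is the cleaner self-contained argument; indeed, your final paragraph already identifies the paper's route as the less tidy alternative.
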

\begin{proof}
	By Lemma~\ref{lemma2} and Corollary~\ref{co10}.
\end{proof}

\begin{corollary}
	For each infinite set $A$, $\calb(A)\not\bfto A$.
\end{corollary}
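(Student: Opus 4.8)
The plan is to derive this as an immediate consequence of Theorem~\ref{main2}, since $A$ embeds into $\fin(A)$ in a trivial boundedly finite-to-one (indeed injective) way. Concretely, I would first observe that the map sending each $a\in A$ to the singleton $\{a\}\in\fin(A)$ is an injection, hence certainly boundedly finite-to-one, so $A\bfto\fin(A)$ for every set $A$. (Equivalently, one can simply invoke the preceding corollary with $n=1$, using $A^1=A$.)

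Next, I would argue by contradiction: suppose $\calb(A)\bfto A$ for some infinite set $A$, witnessed by a function $f$ for which every point has at most $k$ preimages. Composing $f$ with the singleton embedding $a\mapsto\{a\}$ yields a function from $\calb(A)$ to $\fin(A)$; since compositions of boundedly finite-to-one functions are boundedly finite-to-one (as noted in the Preliminaries), this composite is boundedly finite-to-one, giving $\calb(A)\bfto\fin(A)$.

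This contradicts Theorem~\ref{main2}, which states precisely that $\calb(A)\not\bfto\fin(A)$ for every infinite set $A$. Hence no such $f$ can exist, and $\calb(A)\not\bfto A$ for each infinite $A$.

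There is essentially no obstacle here: the entire content has already been established in Theorem~\ref{main2}, and the only additional ingredient is the harmless embedding of $A$ into $\fin(A)$ together with closure of the class of boundedly finite-to-one functions under composition. The proof is a one-line reduction.
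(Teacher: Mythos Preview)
Your proposal is correct and matches the paper's approach: the paper states this corollary without proof, as it is the immediate special case $n=1$ of the preceding corollary (which in turn reduces to Theorem~\ref{main2}), exactly the alternative you yourself note. Your direct reduction via the singleton embedding $a\mapsto\{a\}$ is an equally valid and essentially equivalent one-line argument.
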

The question whether the existence of an infinite set $A$ such that $\calb(A)\fto A$ is relatively consistent with $\ZF$ is still open.

\section{Relationship between $\cals_m(A)$ and $\cals_n(A)$}
In \cite[Theorem~3.2]{NuVejja2022}, it is shown that $\cals_n(A)\inj\cals_{n+1}(A)$ is not provable in $\mathsf{ZF}$. Following on from that, Sonpanow and Vejjajiva prove in \cite[Theorem~2.7]{SonVejja2023} that for any infinite set $A$ and any natural numbers $m,n>1$ such that $m\geq2n$, $\cals_n(A)\inj\cals_m(A)$, and ask whether this is the best possible result. Below we negatively answer this question by providing an injection from $\cals_n(A)$ to $\cals_m(A)$ for every natural numbers $m,n$ with $m\geq n+2$. In addition, we prove that $\cals_n(A)\fto\cals_{n+1}(A)$ is not provable in $\mathsf{ZF}$, as a consequence of which, $\cals_n(A)\fto\cals_m(A)$ is not provable in $\ZF$ for every natural numbers $m,n$ with $n>m>0$.

The main idea of the proof of the following theorem is originally from~\cite[Lemma~(i)]{Truss1973} (cf.~also~\cite{Shen2023b}).
\begin{theorem}\label{main3}
	For each infinite set $A$ and natural numbers $m,n$ with $m\geq n+2$,
	$\cals_n(A)\inj\cals_m(A)$.
\end{theorem}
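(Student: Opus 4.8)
\emph{Plan.} First I would dispose of the degenerate cases: for $n=0$ we have $\cals_0(A)=\{\id\}$ and for $n=1$ we have $\cals_1(A)=\varnothing$, so since $\cals_m(A)\neq\varnothing$ (as $A$ is infinite and $m\geq 2$) there is trivially an injection. So assume $n\geq 2$ and set $p=m-n\geq 2$. Since $A$ is infinite, fix pairwise distinct elements $e_1,\dots,e_N$ of $A$, with $N$ a large multiple of $p$ to be specified; regard the $e_i$ as partitioned into consecutive blocks $B_0,B_1,\dots$ of size $p$, and for each block $B_b$ let $D_b$ be the $p$-cycle running through $B_b$ in increasing order of index.

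The basic construction, following the technique of \cite{Truss1973} and \cite{Shen2023b}, is to attach a $p$-cycle on one of the parameter blocks to $s$, raising the number of moved points from $n$ to $n+p=m$ while keeping $s$ recoverable. For a ``generic'' $s$ (one whose moved set avoids all the $e_i$) one simply puts $F(s)=s\cdot D_0$: since $B_0$ is disjoint from $\mov(s)$ this lies in $\cals_m(A)$, and $s=F(s)\cdot D_0^{-1}$. For a general $s$ one instead puts $F(s)=s\cdot D_{\ell(s)}$, where $\ell(s)$ is chosen, by a fixed rule, to be a block disjoint from $\mov(s)$ that moreover lies \emph{after every} block on which $s$ itself already acts as the corresponding increasing $p$-cycle. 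This ``after every'' stipulation is the crucial point: it forces $B_{\ell(s)}$ to be the highest block $B_b$ such that $D_b$ is a cycle of $t:=F(s)$ and $t\cdot D_b^{-1}\in\cals_n(A)$, because any other such $b$ would be one of the blocks already ``used'' by $s$ and hence satisfy $b<\ell(s)$. Thus $s$ is reconstructed from $t$ as $t\cdot D_{b^\ast}^{-1}$, where $b^\ast$ is the highest candidate block, and two distinct preimages of the same $t$ would contradict this maximality — which gives injectivity of $F$ on its domain.

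The main obstacle is making this precise so that $\ell(s)$ is always defined and the pieces fit together injectively, and here the new range $n+2\leq m<2n$ is genuinely harder than the range $m\geq 2n$ handled by Sonpanow and Vejjajiva, because when $m<2n$ a single permutation in $\cals_m(A)$ can contain several ``candidate'' parameter blocks. One takes $N$ large enough that, $\mov(s)$ having only $n$ points, a free block beyond all the (at most $\lfloor n/p\rfloor$) blocks that $s$ could act on as a block-cycle is always available — \emph{except} for finitely many ``shapes'' of degenerate $s$ whose moved set is too entangled with the top parameters; these exceptional $s$ are sent, by a separate ad hoc injection, to permutations that are single $m$-cycles supported on a reserved stretch of fresh parameters, and since such a permutation has no $p$-cycle among its cycles it lies outside the image of the main construction and causes no collisions. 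Assembling the generic map, the block-shift map, and this exceptional map yields the desired injection $\cals_n(A)\inj\cals_m(A)$.
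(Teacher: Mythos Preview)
Your overall strategy --- attach an $(m-n)$-cycle $D_{\ell(s)}$ on a parameter block disjoint from $\mov(s)$, with $\ell(s)$ chosen high enough to be read off as the topmost candidate block in the image --- is a reasonable variant of the Truss--Shen idea, and the injectivity argument for the main branch is fine. The gap is in the exceptional branch. In the range $n+2\leq m<2n$ (i.e.\ $p:=m-n<n$), the set of $s$ for which $\ell(s)$ is undefined is \emph{infinite}, not merely of ``finitely many shapes'': for instance, every $s$ whose cycle decomposition consists of $D_{N/p-1}$ together with an arbitrary permutation of $n-p\geq 1$ points of $A\setminus\{e_1,\dots,e_N\}$ is exceptional, and since $A$ is infinite there are infinitely many such~$s$. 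Your proposed target for the ad hoc injection --- single $m$-cycles supported on a fixed finite stretch of fresh parameters --- is a \emph{finite} set, so no injection from the exceptional set into it can exist. The construction therefore collapses precisely in the new range the theorem is meant to cover (for $m\geq 2n$ the exceptional set really is finite and your sketch would go through, but that case was already handled in~\cite{SonVejja2023}).

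The paper sidesteps this boundary problem by a different device. It fixes $n+1$ reservoir sets $H_0,\dots,H_n$ of exponentially growing sizes (so that each $H_i$ contains a copy $b_{i,x}$ of every $x\in\bigcup_{k<i}H_k$), takes $i_s$ to be the least $i$ with $\mov(s)\cap H_i=\varnothing$ (which exists by pigeonhole), and then \emph{conjugates} $s$ by an involution $h_s$ that swaps each $x\in\mov(s)\cap\bigcup_{k<i_s}H_k$ with $b_{i_s,x}\in H_{i_s}$ before attaching the $(m-n)$-cycle $(a_{i_s,0};\dots;a_{i_s,m-n-1})$ inside $H_{i_s}$. After conjugation the image $f(s)$ moves no point of any $H_k$ with $k<i_s$, so $i_s$ is recovered as the \emph{least} index of a reservoir meeting $\mov(f(s))$; there is no ``beyond the last block'' issue and hence no exceptional case at all. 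If you want to repair your argument, this conjugation step is exactly the missing ingredient.
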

\begin{proof}
	Let $A$ be an infinite set and let $m,n$ be natural numbers with $m\geq n+2$.
	Since $2^{i}=1+\sum_{k<i}2^k$, we can pick $(m-n)(2^{n+1}-1)$ distinct elements of $A$ so that they are divided into
	$n+1$ sets $H_i$ ($i\leq n$) with
	\[
	H_i=\{a_{i,j}\mid j<m-n\}\cup\{b_{i,x}\mid x\in H_k\text{ for some }k<i\}.
	\]
	We construct an injection $f$ from $\cals_n(A)$ into $\cals_m(A)$ as follows.
	
	Let $s\in \cals_n(A)$. Let $i_s$ be the least $i\leq  n$ such that $\mov(s)\cap H_i=\varnothing$.
	There is such an $i$ because $|\mov(s)|=n$.
	Let $h_s$ be the involution of $A$ that swaps $x$ and $b_{i_s,x}$
	for all $x\in\mov(s)\cap\bigcup_{k<i_s}H_k$, and fixes all other elements of $A$.
	Let $t_s\in \cals_n(A)$ and $u_s\in \cals_{m-n}(A)$ be defined by
	\begin{align*}
		t_s & =h_s\circ s\circ h_s,\\
		u_s & =(a_{i_s,0};\dots;a_{i_s,m-n-1}).
	\end{align*}
	Now, we define
	\[
	f(s)=t_s\circ u_s.
	\]
	Clearly, $f(s)\in \cals_m(A)$.

\pagebreak
	
	We prove that $f$ is injective by showing that $s$ is uniquely determined by $f(s)$ in the following way.
	First, $i_s$ is the least $i\leq  n$ such that $f(s)$ moves some elements of~$H_i$.
	Second, $t_s$ is the permutation of $A$ that coincides with $f(s)$ except that it fixes all elements of $\{a_{i_s,j}\mid j<m-n\}$.
	Then, $h_s$ is the involution of $A$ that swaps $x$ and $b_{i_s,x}$ for all $x\in\bigcup_{k<i_s}H_k$
	such that $b_{i_s,x}\in\mov(t_s)$, and fixes all other elements of $A$. Finally,
	\[
	s=h_s\circ t_s\circ h_s.
	\]
	Hence, $f$ is an injection from $ \cals_n(A)$ into $ \cals_m(A)$.
\end{proof}

\begin{theorem}\label{main4}
	It is relatively consistent with $\ZF$ that, for some infinite set $A$, $\cals_n(A)\not\fto\cals_{n+1}(A)$ for all $n>1$.
\end{theorem}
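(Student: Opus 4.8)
The plan is to establish the statement in a permutation model and then transfer it to $\ZF$ by the Jech--Sochor embedding theorem. I would work in a ground model $\mathcal{M}$ of $\mathsf{ZFA}+\mathsf{AC}$ with an infinite set $A$ of atoms, take $\mathcal{G}=\mathrm{Sym}(A)$ to be the group of all permutations of $A$, and let $\mathcal{V}$ be the basic Fraenkel model, i.e.\ the permutation model determined by $\mathcal{G}$ and the normal filter of subgroups generated by the pointwise stabilizers of finite subsets of $A$. Since $A$ is not finite in $\mathcal{V}$, it suffices to prove that, for every $n\geq 2$, in $\mathcal{V}$ there is no finite-to-one function from $\cals_n(A)$ to $\cals_{n+1}(A)$.

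To prove this, fix $n\geq 2$ and suppose toward a contradiction that $F\in\mathcal{V}$ is such a function; let $E$ be a finite support of $F$. Choose pairwise distinct atoms $a_0,\dots,a_{n-1}\in A\setminus E$, put $s=(a_0;\dots;a_{n-1})\in\cals_n(A)$, and set $u=F(s)\in\cals_{n+1}(A)$. A routine support computation gives $\mov(u)\subseteq E\cup\mov(s)$ (any permutation fixing $E\cup\mov(s)$ pointwise fixes both $F$ and $s$, hence fixes $u$); applying it with the permutation $s$ itself as the test element shows in addition that $u$ commutes with $s$. From $us=su$ together with $\mov(u)\subseteq E\cup\mov(s)$ one deduces that $u$ maps $\mov(s)$ onto itself, so $u\rstr\mov(s)$ is a power of the $n$-cycle $s\rstr\mov(s)$ and thus moves either $0$ or $n$ points. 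The case of $n$ points is impossible: then $u$, which moves exactly $n+1$ points, would move exactly one further point, necessarily lying in $E$; but $u$ maps $\mov(u)\cap E$ bijectively onto itself, and no permutation moves exactly one point. Hence $u\rstr\mov(s)=\id$ and $\mov(u)\subseteq E$. Therefore $F$ sends \emph{every} $n$-cycle on $A\setminus E$ into the finite set $\{v\in\cals_{n+1}(A)\mid\mov(v)\subseteq E\}$; since the set of all $n$-cycles on $A\setminus E$ surjects (via $v\mapsto\mov(v)$) onto the set of $n$-element subsets of $A\setminus E$ and is therefore not finite, no such $F$ can be finite-to-one --- a contradiction.

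To transfer this to $\ZF$, I would observe that $\cals_k(A)\subseteq\calp(A\times A)$ for each $k$, so every function from $\cals_n(A)$ to $\cals_{n+1}(A)$, as well as every natural number, lies in $\calp^{\omega}(A)$. By the Jech--Sochor first embedding theorem there is then a symmetric extension $\mathcal{N}\models\ZF$ and a set $\widetilde{A}\in\mathcal{N}$ admitting an $\in$-isomorphism of $(\calp^{\omega}(A))^{\mathcal{V}}$ onto $(\calp^{\omega}(\widetilde{A}))^{\mathcal{N}}$. Since ``$\cals_k(\,\cdot\,)$'', ``being a finite-to-one function'', and ``being infinite'' are absolute under this isomorphism, a finite-to-one function from $\cals_n(\widetilde{A})$ to $\cals_{n+1}(\widetilde{A})$ in $\mathcal{N}$ would pull back to one in $\mathcal{V}$; hence $\mathcal{N}$ satisfies ``$\widetilde{A}$ is infinite and $\cals_n(\widetilde{A})\not\fto\cals_{n+1}(\widetilde{A})$ for all $n>1$''. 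As $\mathrm{Con}(\ZF)$ implies $\mathrm{Con}(\mathsf{ZFA}+\mathsf{AC})$, this yields the desired consistency.

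The main obstacle I anticipate is the combinatorial rigidity in the middle paragraph: one must squeeze out of the bare existence of a finite support that $F$ is forced to take only finitely many values on the $n$-cycles avoiding that support. The decisive input is that a permutation cannot move exactly one point --- precisely the phenomenon that makes $\cals_n(A)\inj\cals_m(A)$ hold (Theorem~\ref{main3}) for $m\geq n+2$ yet fail at $m=n+1$ --- and getting the support-and-commutation bookkeeping exactly right is the delicate step. The transfer step is routine once the rank bound $\calp^{\omega}(A)$ has been identified; the only other point needing a little care is verifying that the set of $n$-cycles on $A\setminus E$ really is not finite in $\mathcal{V}$, which holds because a finite union of $n$-element sets cannot exhaust the non-finite set $A\setminus E$.
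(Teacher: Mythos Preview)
Your proof is correct and uses the same global framework as the paper (basic Fraenkel model plus Jech--Sochor transfer), but the combinatorial core is organized differently. The paper fixes any $s\in\cals_n(A)$ with $\mov(s)\cap E=\varnothing$ and first uses the finite-to-one hypothesis \emph{up front}: if some $a\in\mov(s)\setminus\mov(f(s))$ existed, the transpositions $(a;b)$ for $b\notin E\cup\mov(f(s))$ would produce infinitely many preimages of $f(s)$. This forces $\mov(s)\subseteq\mov(f(s))\subseteq\mov(s)\cup E$, hence $\mov(f(s))=\mov(s)\cup\{e\}$ for a single $e\in E$; the commutation relation then gives $s(f(s)(e))=f(s)(e)\in\mov(s)$, an immediate contradiction from one $s$. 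You instead restrict to $n$-cycles $s$, never prove $\mov(s)\subseteq\mov(u)$, and use the centralizer of an $n$-cycle to see that $u\rstr\mov(s)$ is a power of $s\rstr\mov(s)$, whence moves $0$ or $n$ points; ruling out $n$ by the ``no permutation moves one point'' observation yields $\mov(u)\subseteq E$ for \emph{every} such $s$, and the finite-to-one hypothesis is invoked only at the end. The paper's route works for arbitrary $s\in\cals_n(A)$ and extracts a contradiction from a single witness; your route is specific to cycles but is conceptually clean in isolating the centralizer structure as the obstruction. Both ultimately exploit the same parity-type fact that one cannot move exactly one point.
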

\begin{proof}
	By the Jech--Sochor first embedding theorem (cf.~\cite[Theorem~17.2]{Halbeisen2017} or~\cite[Theorem~6.1]{Jech1973}), it suffices to find such an infinite set in the basic Fraenkel model $\calvf$ (cf.~\cite[pp.~195--196]{Halbeisen2017} or \cite[p.~48]{Jech1973}).
The set $A$ of atoms of $\calvf$ is denumerable,
and $x\in\calvf$ if and only if $x\subseteq\calvf$
and $x$ has a \emph{finite support}, that is, a set $E\in\fin(A)$ such that
every permutation of $A$ fixing $E$ pointwise also fixes $x$.
Below we prove that, in $\calvf$, $\cals_n(A)\not\fto\cals_{n+1}(A)$ for all $n>1$.

	Assume on the contrary that $f\in\calvf$ is a finite-to-one function from $\cals_n(A)$ to $\cals_{n+1}(A)$ and $E$ is a finite support of $f$. Let $s$ be any permutation in $\cals_n(A)$ such that $\mov(s)\subseteq A\setminus E$. We claim that $\mov(f(s))=\mov(s)\cup B$ for some $B\subseteq E$.
	
	Assume there is an $a\in\mov(s)\setminus\mov(f(s))$. For each $b\in A\setminus(E\cup\mov(f(s)))$, the transposition $\pi_b=(a;b)$ fixes $E\cup\mov(f(s))$ pointwise, and hence fixes $f$ and $f(s)$, which implies $f(\pi_b(s))=\pi_b(f(s))=f(s)$. So $f$ maps the infinite set $\{\pi_b(s)\mid b\in A\setminus(E\cup\mov(f(s)))\}$ to $f(s)$, contradicting that $f$ is finite-to-one. Thus $\mov(s)\subseteq\mov(f(s))$.
	
	Assume there is an $a\in\mov(f(s))\setminus(\mov(s)\cup E)$. Let $b\in A\setminus(E\cup\mov(f(s)))$. The transposition $\tau=(a;b)$ fixes $E\cup\mov(s)$ pointwise, and hence fixes $f$ and $s$. But $\tau(f(s))\ne f(s)$, a contradiction. Thus $\mov(f(s))\subseteq\mov(s)\cup E$.
	
	Since $|\mov(s)|=n$ and $|\mov(f(s))|=n+1$, there is exactly one point $e\in E$ such that $e\in\mov(f(s))\setminus\mov(s)$, that is, $\mov(f(s))=\mov(s)\cup\{e\}$. Let $d=f(s)(e)$. Then $d\in\mov(s)$. Since $s$, as a permutation, fixes $E$ pointwise, it follows that
	\[
		s(d)=s(f(s)(e))=f(s(s))(s(e))=f(s)(e)=d,
	\]
	contradicting $d\in\mov(s)$.
\end{proof}

\begin{corollary}
	It is relatively consistent with $\ZF$ that, for some infinite set $A$, $\cals_n(A)\not\fto\cals_m(A)$ for all $m,n\in\omega$ with $n>m>0$.
\end{corollary}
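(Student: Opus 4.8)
The plan is to reduce the corollary to Theorem~\ref{main4} by feeding it the injections supplied by Theorem~\ref{main3}. Recall that Theorem~\ref{main4} produces (via the Jech--Sochor theorem) a model of $\ZF$ in which there is an infinite set $A$ with $\cals_n(A)\not\fto\cals_{n+1}(A)$ for every $n>1$. I claim that in this \emph{same} model one already has $\cals_n(A)\not\fto\cals_m(A)$ for all $n>m>0$, which is exactly what the corollary asks for. Since Theorem~\ref{main3} is a theorem of $\ZF$, it holds in this model as well.

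So, working in that model, fix natural numbers $n>m>0$ and suppose towards a contradiction that $\cals_n(A)\fto\cals_m(A)$, witnessed by a finite-to-one function $f\colon\cals_n(A)\to\cals_m(A)$. Since $n>m\geq 1$ we have $n\geq m+1\geq 2$; in particular $n>1$ and $n+1\geq m+2$. By Theorem~\ref{main3} there is an injection $g\colon\cals_m(A)\inj\cals_{n+1}(A)$. Every injection is in particular finite-to-one, and compositions of finite-to-one functions are finite-to-one (as noted in the preliminaries), so $g\circ f\colon\cals_n(A)\to\cals_{n+1}(A)$ is finite-to-one; that is, $\cals_n(A)\fto\cals_{n+1}(A)$. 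As $n>1$, this contradicts Theorem~\ref{main4}. Hence no such $f$ exists, and since $n,m$ were arbitrary with $n>m>0$, the corollary follows.

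The argument is essentially bookkeeping, so I do not expect any real obstacle. The one thing to watch is the index arithmetic: Theorem~\ref{main3} requires the target index to exceed the source index by at least $2$, and passing from $\cals_m(A)$ to $\cals_{n+1}(A)$ this gap is $n+1-m\geq 2$ precisely because $n>m$. One should also observe that the degenerate small cases do not arise here, since $n>m>0$ forces $n\geq 2>1$, so Theorem~\ref{main4} is genuinely applicable at every index that occurs.
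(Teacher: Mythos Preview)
Your proof is correct and is essentially the same argument as the paper's: work in the model from Theorem~\ref{main4}, assume $\cals_n(A)\fto\cals_m(A)$ for some $n>m>0$, and compose with the injection $\cals_m(A)\inj\cals_{n+1}(A)$ from Theorem~\ref{main3} (available since $n+1\geq m+2$) to contradict $\cals_n(A)\not\fto\cals_{n+1}(A)$. The index bookkeeping you highlight is exactly the point the paper uses.
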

\begin{proof}
	By Theorem~\ref{main4}, it is relatively consistent with $\ZF$ that, for some infinite set $A$, $\cals_n(A)\not\fto\cals_{n+1}(A)$ for all $n>1$. For such an infinite set $A$, if $\cals_n(A)\fto\cals_m(A)$ for some $m,n\in\omega$ with $n>m>0$, since $n+1\geq m+2$, it follows that
	\[
		\cals_n(A)\fto\cals_m(A)\inj\cals_{n+1}(A)
	\]
	by Theorem~\ref{main3}, a contradiction.
\end{proof}

Given the previous results, we conclude that the relationship between $\cals_m(A)$ and $\cals_n(A)$ is as follows:

\begin{theorem}
	Let $A$ be any infinite set and let $m,n>1$ be any natural numbers.
	\begin{enumerate}[label=\upshape(\roman*), leftmargin=*, widest=ii]
		\item If $m\geq n+2$ or $m=n$, then $\cals_n(A)\inj\cals_m(A)$.
		\item If $m=n+1$ or $m<n$, then $\cals_n(A)\fto\cals_m(A)$ is not provable in $\mathsf{ZF}$.
	\end{enumerate}
\end{theorem}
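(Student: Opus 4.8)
The plan is to assemble this theorem purely from results already established in this section, together with the trivial diagonal case; no new argument is needed, so I expect no genuine obstacle beyond careful bookkeeping of the quantifier over $A$ and of the meaning of ``not provable in $\ZF$''.

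For part~(i) I would split into two subcases. If $m=n$, the identity map on $\cals_n(A)$ already witnesses $\cals_n(A)\inj\cals_n(A)$. If $m\geq n+2$, then Theorem~\ref{main3} applies verbatim and yields $\cals_n(A)\inj\cals_m(A)$.

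For part~(ii) I would also split into two subcases. If $m=n+1$, then by Theorem~\ref{main4} there is a model of $\ZF$ containing an infinite set $A$ with $\cals_k(A)\not\fto\cals_{k+1}(A)$ for every $k>1$; specializing to $k=n$ shows that the sentence ``for every infinite set $A$, $\cals_n(A)\fto\cals_{n+1}(A)$'' fails in that model, hence is not a theorem of $\ZF$. If $m<n$, then since $m>1$ we have $n>m>0$, so by the corollary to Theorem~\ref{main4} there is a model of $\ZF$ containing an infinite set $A$ with $\cals_j(A)\not\fto\cals_i(A)$ for all $j>i>0$; specializing to $j=n$ and $i=m$ shows that the sentence ``for every infinite set $A$, $\cals_n(A)\fto\cals_m(A)$'' fails in that model, hence is not a theorem of $\ZF$.

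Finally I would observe that the four subcases $m\geq n+2$, $m=n+1$, $m=n$, and $m<n$ exhaust all pairs of natural numbers $m,n>1$, so the theorem follows. The one point deserving a word of care is the reading of the phrase ``$\cals_n(A)\fto\cals_m(A)$ is not provable in $\ZF$'': this is to be understood as the assertion that the universally quantified sentence $\forall A\,(A\text{ infinite}\to\cals_n(A)\fto\cals_m(A))$ is not a theorem of $\ZF$, and the consistency statements cited above furnish precisely the models in which that sentence is false.
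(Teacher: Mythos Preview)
Your proposal is correct and matches the paper's approach: the paper simply presents this theorem as a summary of the preceding results without giving a separate proof, and your assembly of Theorem~\ref{main3}, Theorem~\ref{main4}, and its corollary (plus the trivial identity for $m=n$) is exactly the intended derivation. Your added remark clarifying the reading of ``not provable in $\ZF$'' is a helpful gloss but not something the paper spells out.
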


\section{Open questions}
We conclude the paper with the following two open questions.

\begin{question}
Is it relatively consistent with $\ZF$ that there exists an infinite set $A$ such that $\calb(A)\fto A$?
\end{question}

\begin{question}\label{que00}
Does $\ZF$ prove that, for all sets $A,B$ and all positive integers $n,k$,
if there is a function from $n\times A$ to $n\times B$ such that each point has at most $k$ inverse images,
then there is a function from $A$ to $B$ such that each point has at most $k$ inverse images.
\end{question}

Note that the case $k=1$ of Question~\ref{que00} holds true and is known as the Bernstein--Lindenbaum division theorem (cf.~\cite{Hinkis2013}).

\subsection*{Acknowledgements}
We would like to give thanks to two anonymous referees for making useful suggestions.

\end{document}